\newtheorem{theorem}{Theorem}[section]
\newtheorem{lemma}[theorem]{Lemma}
\newtheorem{definition}{Definition}
\newtheorem{corollary}[theorem]{Corollary}
\newtheorem{proposition}[theorem]{Proposition}
\newtheorem{remark}{Remark}
\newtheorem{assumption}{Assumption}
\newtheorem{example}{Example}
\newenvironment{pfof}[1]{\vspace{1ex}\noindent{\itshape 
	Proof of #1:}\hspace{0.5em}} {\hfill\QEDBL\vspace{1ex}}
\newcommand{\mc}{\mathcal}
\newcommand{\real}{\mathbb{R}}
\newcommand{\realpos}{\mathbb{R}_{\geq 0}}
\newcommand{\negat}{\scalebox{0.75}[.9]{\( - \)}}
\newcommand*{\QEDB}{\hfill\ensuremath{\square}}%  empty square
\newcommand*{\QEDBL}{\hfill\ensuremath{\blacksquare}}% Black square
\newcommand\oprocendsymbol{\hbox{$\square$}}
\newcommand\oprocend{\relax\ifmmode\else\unskip\hfill%
\fi\oprocendsymbol}
\newcommand{\map}[3]{#1: #2 \rightarrow #3}
\newcommand{\norm}[1]{\Vert #1 \Vert}
\newcommand{\blue}[1]{{\color{blue} #1}}
\DeclareMathOperator{\EX}{\mathbb{E}}% expected value
\DeclareMathOperator{\sW}{\text{subW}}
\DeclareMathOperator{\proj}{\text{proj}}
\def\BibTeX{{\rm B\kern-.05em{\sc i\kern-.025em b}\kern-.08em
    T\kern-.1667em\lower.7ex\hbox{E}\kern-.125emX}}
\title{Online Projected Gradient Descent for Stochastic Optimization with Decision-Dependent Distributions}
\author{Killian Wood, Gianluca Bianchin, and Emiliano Dall'Anese
\thanks{K. Wood is with the  Dept. of Applied Mathematics, University of Colorado Boulder. G. Bianchin and E. Dall'Anese are with the  Dept. of Electrical, Computer, and Energy Eng., University of Colorado Boulder. This work was supported in part by the National Science Foundation ERC ASPIRE and by the National Renewable Energy Laboratory through the subcontract UGA-0-41026-148.}
}
\begin{document}
\maketitle
\thispagestyle{empty} % Removes the page number in the first page
\begin{abstract}
This paper investigates the problem of tracking solutions of stochastic
optimization problems with time-varying costs that depend on 
random variables with decision-dependent  distributions. In this 
context, we propose the use of an online stochastic gradient descent 
method to solve the optimization, and we provide explicit bounds in 
expectation and in high probability for the distance between the 
optimizers and the points generated by the algorithm. In particular, we
show that  when the gradient error due to sampling is modeled as a
sub-Weibull random variable, then the tracking error is ultimately
bounded in expectation and in high probability.
The theoretical findings are validated via numerical simulations in the
context of charging optimization of a fleet of electric vehicles.
\end{abstract}

\begin{IEEEkeywords}
Optimization, Optimization algorithms. % Statistical learning.
\end{IEEEkeywords}

\section{Introduction}

\IEEEPARstart{T}{his} paper considers the problem of developing and analyzing online algorithms to track the solutions of time-varying stochastic optimization problems, where the distribution of the underlying random variables is decision-dependent. Formally, we consider
problems of the form\footnote{\textit{Notation.} We let $\mathbb{N}_0 := \mathbb{N} \cup \{0\}$, where $\mathbb{N}$
denotes the set of natural numbers. For a given column vector $x \in \mathbb{R}^n$, $\| x\|$ is the Euclidean norm. Given a differentiable function $f: \mathbb{R}^n \rightarrow \mathbb{R}$, $\nabla f(x)$ denotes the gradient of $f$ at $x$ (taken to be a column vector). Given a closed convex set
$C \subseteq \mathbb{R}^n$, $\proj_{C}:\mathbb{R}^n \to \mathbb{R}^n$ denotes the 
Euclidean projection of $y$ onto $C$, namely $\proj_{C} (y) := \arg \min_{v \in C} \norm{y-v}$. For a given random variable $z \in \mathbb{R}$, $\mathbb{E}[z]$ denotes the expected value of $z$, and $\mathbb{P}(z \leq \epsilon)$ denotes the probability of $z$ taking values smaller than or equal to $\epsilon$; $\|z\|_p := \mathbb{E}[|z|^p]^{1 / p}$, for any $p \geq 1$. Finally, $e$ denotes Euler's number.
}:
\begin{equation}
\label{problemStatement}
x^{*}_{t} \in\arg\min_{x\in C_{t}} \underset{z\sim D_{t}(x)}{\EX} \left[ \ell_{t} (x,z)  \right],
\end{equation}
where $t\in\mathbb{N}_{0}$ is a time index, $x \in \real^d$ is 
the decision variable, $D_{t}$ is a map from the set $\real^d$ to the space of 
distributions, $z \in \mathcal{Z}_t$ is a random variable (with $\mathcal{Z}_t$ the union of the support of $D_{t}(x)$ for all $x \in C_t$),
$\map{\ell_{t}}{\real^d \times \mathcal{Z}_t}{\real}$ is the loss function, and $C_{t}\subseteq\real^{d}$ is a closed and convex set. 
Problems of this form arise in sequential learning and 
strategic classification~\cite{wilson2019adaptive}, and in 
applications in power and energy 
systems~\cite{mathieu2011examining,tushar2012economics} to 
model uncertainty in pricing and human behavior.  
Moreover, the framework \eqref{problemStatement} can be used
to solve control problems for dynamical systems whose 
dynamics are  unknown, where the variable $z$  is used to account for the lack of 
knowledge of the underlying system dynamics (similarly to  
problems in feedback-based 
optimization~\cite{hauswirth2021optimization,bianchin2021online}).
% );  thus, techniques to 
% solve \eqref{problemStatement} can be applied for the control of 
% dynamical systems (see, e.g.,~\cite{}).

Since the distribution of $z$ in~\eqref{problemStatement} 
depends on the decision variable $x$, the problem of finding $x_t^*$ is computationally burdensome for general cases, and intractable when $D_t$ in unknown -- even when the loss function is convex 
in $x$~\cite{perdomo2020performative,drusvyatskiy2020stochastic}. 
For this reason, we focus on finding decisions that are optimal with 
respect to the distribution that they induce; we refer to these points 
as \emph{performatively stable}~\cite{perdomo2020performative}, while 
we refer to solutions $x_t^*$ to the original 
problem~\eqref{problemStatement} as \emph{performatively  optimal}. 
We obtain explicit error bounds between performatively optimal
and performatively stable  points by leveraging tools 
from~\cite{perdomo2020performative,drusvyatskiy2020stochastic}.
The main focus of this work is to propose and analyze online algorithms
that can determine performatively stable points, in contexts where the 
loss function and constraint set are revealed sequentially.  
Since the distributional map $D_{t}$ 
may be unknown in practice, we then extend our techniques to stochastic
methods that only require samples of $z$.

\textit{Prior Work:} Online (projected) gradient descent 
methods have been well-investigated by using tools 
form the controls community, we refer to the 
representative works~\cite{popkov2005gradient,selvaratnam2018Numerical,mokhtari2016online,simonetto2017time,madden2021bounds} as well as 
to pertinent references therein. Convergence guarantees for  online 
stochastic gradient methods where drift and noise terms satisfy 
sub-Gaussian assumptions were recently provided
in~\cite{cutler2021stochastic}. Online stochastic optimization problems with time-varying distributions are studied in, e.g., \cite{wilson2019adaptive,cao2020online,shames2020online}. On the other hand,  time-varying costs are considered in \cite{CW-VV-AN:18}, along with sampling strategies to satisfy regret guarantees. For static optimization problems, the notion of performatively stable points is introduced in \cite{perdomo2020performative}, where error bounds for risk minimization and gradient descent methods applied to stochastic problems with decision-dependent distributions are provided. Stochastic gradient methods to identify performatively stable points for decision-dependent distributions are studied in \cite{drusvyatskiy2020stochastic,mendler2020stochastic} -- the latter also providing results for an online setting in expectation. 
A stochastic gradient method 
for time-invariant distributional maps is presented in \cite{izzo2021learn}. 

\textit{Contributions:} 
We offer the following main contributions. C1) First, we propose an 
online projected gradient descent (OPGD) method to 
solve~\eqref{problemStatement}, and we show that the tracking error 
(relative to the performatively stable points) is ultimately 
bounded by terms that account for the temporal drift of the 
optimizers.
C2) Second, we propose an online stochastic projected gradient descent 
(OSPGD) and we provide error bounds in \emph{expectation} and in
\emph{high probability}. 
Our bounds in high probability are derived by modeling the 
gradient error as a  sub-Weibull random 
variable~\cite{vladimirova2020sub}: this allows us to capture a 
variety of sub-cases, including scenarios where the error follows 
sub-Gaussian and sub-Exponential distributions~\cite{Vershynin}, or 
any distribution with finite support.

% Our contributions unify the aforementioned themes of online optimization with time-varying distributions and stochastic optimization with decision-dependent distributions. 
Relative to \cite{wilson2019adaptive,cao2020online,shames2020online,CW-VV-AN:18} our distributions are decision dependent; relative to \cite{perdomo2020performative,drusvyatskiy2020stochastic,mendler2020stochastic,izzo2021learn}, our cost and distributional maps are time varying. 
Moreover, our results do not rely on bias or variance assumptions regarding the gradient estimator. In the absence of distributional shift and without a sub-Weibull error, our upper bounds reduce to the results of \cite{selvaratnam2018Numerical}. Relative to \cite{izzo2021learn}, we seek performatively stable points rather than the performative optima. In doing so, we incur the error characterized in \cite{perdomo2020performative}; however, we do not restrict to distributional maps that are continuous distributions or require finite difference approximations.
With respect to the available literature on stochastic 
optimization, we provide for the first time explicit 
bounds in expectation and in high probability to solve
stochastic optimization with decision dependent 
distributions in the presence of time-dependent 
distributional maps.
% maps and offers high probability bounds.
% via the sub-Weibull  error model. 

The remainder of this paper is organized as follows. 
Section~\ref{sec:preliminaries} introduces some preliminaries; 
Section~\ref{sec:OPGD} studies the OPGD, and Section~\ref{sec:OSPGD} 
studies the OSPGD. Section~\ref{sec:results} illustrates simulation 
results, and Section~\ref{sec:conclusions} concludes the~paper. 

\section{Preliminaries}
\label{sec:preliminaries}
We first introduce preliminary definitions and results. We consider random variables $z$ that take values on a metric space 
$(M,d)$, where the set $M$ is equipped with the Borel $\sigma$-algebra 
induced by metric $d$. We assume that $M$ is a complete and separable metric space (hence $M$ is a Polish space). We let 
$\mathcal{P}(M)$ denote the set of Radon probability measures on $M$ 
with finite first moment. 
Given $\nu \in \mathcal{P}(M)$, $z \sim \nu$ denotes that the
random variable $z$ is distributed according to $\nu$.
Due to Kantorovich-Rubenstein duality, the 
Wasserstein-1 distance between  $\mu,\nu\in \mathcal{P}(M)$
can be defined as~\cite{kantorovich1958space}:
\begin{equation}
\label{def:Wasserstein}
W_{1}(\mu,\nu) = \underset{g\in \text{Lip}_{1}}{\sup}\left\{\underset{z\sim\mu}{\EX} \left[g(z)\right]-\underset{z\sim\nu}{\EX}  \left[g(z)\right] \right\},
\end{equation}
where $\text{Lip}_{1}$ is the set of 1-Lipschitz functions over $M$. We note that the pair $(\mathcal{P}(M),W_{1})$ describes a metric space 
of probability measures.

\textit{Heavy-Tailed Distributions.}
In this paper, we will utilize the sub-Weibull model~\cite{vladimirova2020sub}, introduced next.

\begin{definition}{(\textbf{\textit{Sub-Weibull Random Variable}})}
\label{def:subweibull}
$z$ is a sub-Weibull random variable, denoted by 
$z\sim \sW(\theta,\nu)$, if there  
exists $\theta,\nu > 0$ such that $\norm{z}_{k}\leq \nu k^{\theta}$ for all $k\geq1$.
\end{definition}

\smallskip

The parameter $\theta$ measures the heaviness of the tail (higher values correspond to 
heavier tails) and the parameter $\nu$ measures the 
proxy-variance~\cite{vladimirova2020sub}. 
In what follows, we will also use the following equivalent 
characterization of a sub-Weibull random variable: 
$z\sim \sW(\theta,\nu)$ if and only if
$\exists~\theta, \nu' > 0, \mathbb{P}(|z| \geq \epsilon ) \leq 2\exp(-\left(\epsilon/\nu')\right)^{1/\theta}$.
As shown in \cite{wong2020lasso}, the two characterizations are 
equivalent by choosing $\nu=\left(\frac{\theta}{2e}\right)^{\theta}\nu'$.
The class of sub-Weibull random variables enjoys the following properties. 

\begin{proposition}{(\textbf{\textit{Closure of Sub-Weibull}})} \label{prop:closureSubWeibull}
Let $z \sim \sW(\theta_1,\nu_1)$ and $y \sim \sW(\theta_2,\nu_2)$ be (possibly coupled) sub-Weibull random variables and let  $c\in\mathbb{R}$. Then, the following holds:
\begin{enumerate}
    \item  $z+y\sim\sW(\max \{\theta_1,\theta_2\}, \nu_1+\nu_2)$;
    \item  $zy\sim \sW(\theta_1 + \theta_2, 
    \psi(\theta_1, \theta_2) \nu_1 \nu_2)$, $\psi(\theta_1, \theta_2) := (\theta_1 + \theta_2)^{\theta_1 + \theta_2} / (\theta_1^{\theta_1} \theta_2^{\theta_2})$;
    \item $z+c\sim \sW(\theta_1, |c| + \nu_1)$;
    \item $cz\sim \sW(\theta_1, |c|\nu_1)$. 
\end{enumerate}

\end{proposition}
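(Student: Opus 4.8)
The plan is to verify each claim directly from the moment characterization in Definition~\ref{def:subweibull}: a variable $w$ satisfies $w \sim \sW(\theta,\nu)$ precisely when $\|w\|_k = \EX[|w|^k]^{1/k} \leq \nu k^{\theta}$ for every $k \geq 1$. Properties 3 and 4 are the most elementary and set the pattern. For property 4, absolute homogeneity of the $L^k$ norm gives $\|cz\|_k = |c|\,\|z\|_k \leq |c|\,\nu_1 k^{\theta_1}$, which is exactly the claim. For property 3, Minkowski's inequality yields $\|z+c\|_k \leq \|z\|_k + |c| \leq \nu_1 k^{\theta_1} + |c|$; since $k \geq 1$ forces $k^{\theta_1} \geq 1$, I bound $|c| \leq |c|\,k^{\theta_1}$ and collect terms to obtain $\|z+c\|_k \leq (\nu_1 + |c|)\,k^{\theta_1}$.

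Property 1 follows the same template. By Minkowski, $\|z+y\|_k \leq \|z\|_k + \|y\|_k \leq \nu_1 k^{\theta_1} + \nu_2 k^{\theta_2}$. Writing $\theta := \max\{\theta_1,\theta_2\}$ and using $k^{\theta_i} \leq k^{\theta}$ for $k \geq 1$ (monotonicity of the power in the exponent), I bound each term by its common larger power to get $\|z+y\|_k \leq (\nu_1+\nu_2)\,k^{\theta}$, the claimed bound. Note that the coupling of $z$ and $y$ is irrelevant here, since Minkowski holds regardless of dependence.

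The main obstacle is property 2, where the somewhat unusual constant $\psi(\theta_1,\theta_2)$ must emerge rather than a cruder one. The key tool is H\"older's inequality with a \emph{free} conjugate pair $p,q > 1$, $1/p + 1/q = 1$. Starting from $\|zy\|_k = \EX[|z|^k|y|^k]^{1/k}$, H\"older gives $\|zy\|_k \leq \|z\|_{kp}\,\|y\|_{kq}$, and applying the sub-Weibull bounds at orders $kp \geq 1$ and $kq \geq 1$ produces $\|zy\|_k \leq \nu_1\nu_2\, k^{\theta_1+\theta_2}\, p^{\theta_1} q^{\theta_2}$. The factor $k^{\theta_1+\theta_2}$ is already the correct growth rate, so the task reduces to minimizing the constant prefactor $p^{\theta_1}q^{\theta_2}$ over admissible conjugate pairs.

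To carry out that minimization I would set $s = 1/p \in (0,1)$, so $1-s = 1/q$, and minimize $-\theta_1 \log s - \theta_2\log(1-s)$; differentiating and setting the derivative to zero gives $s = \theta_1/(\theta_1+\theta_2)$, i.e. $p = (\theta_1+\theta_2)/\theta_1$ and $q = (\theta_1+\theta_2)/\theta_2$. Substituting these optimal exponents collapses the prefactor to exactly $(\theta_1+\theta_2)^{\theta_1+\theta_2}/(\theta_1^{\theta_1}\theta_2^{\theta_2}) = \psi(\theta_1,\theta_2)$, which completes the proof. I expect the only real subtlety to be recognizing that this optimization over H\"older exponents is precisely what generates $\psi$: a naive appeal to Cauchy--Schwarz (the choice $p=q=2$) would instead yield the looser factor $2^{\theta_1+\theta_2}$, so the sharpness of the stated constant hinges on optimizing the exponent rather than fixing it.
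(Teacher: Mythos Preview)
Your argument is correct in every part. The paper's own proof is actually much shorter than yours: it proves only property~3 directly (using exactly the Minkowski-plus-$|c|\leq|c|k^{\theta_1}$ step you give), and dispatches properties~1 and~4 by citation to~\cite{vladimirova2020sub} and property~2 by citation to~\cite{bastianello2021stochastic}. So for property~3 your approach coincides with the paper's, and for the remaining items you are supplying self-contained arguments where the paper simply points to the literature. In particular, your H\"older-plus-optimization derivation of the constant $\psi(\theta_1,\theta_2)$ in property~2 is precisely the computation behind the cited result, and your observation that Cauchy--Schwarz alone would give the looser $2^{\theta_1+\theta_2}$ is a nice explanation of why the constant takes its form. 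The net effect is that your write-up is more informative than the paper's at the cost of a few extra lines.
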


\begin{proof}
Properties 1) and 4) are proved in \cite{vladimirova2020sub}; property 2) is proved in~\cite{bastianello2021stochastic}. To show 3), since $c\in\mathbb{R}$, then for any $k\geq 1$ 
$\|c\|_{k} = |c| \leq |c|k^{\theta}$.
It follows that $\|z+c\|_{k} \leq \|z\|_{k} + \|c\|_{k} \leq \nu k^{\theta} + |c|k^{\theta} \leq (\nu+|c|)k^{\theta}$.
\end{proof}

\section{Online Projected Gradient Descent}
\label{sec:OPGD}
In this section, we propose and study an OPGD method to 
solve~\eqref{problemStatement}. In Section~\ref{sec:OSPGD}, we will 
leverage the results derived in this section to analyze a the 
stochastic version OSPGD.

We begin by outlining our main  assumptions.

\begin{assumption}(\textbf{\textit{Strong Convexity}})
\label{as:strongConvexity}
For a fixed $z \in \mathcal{Z}_t$, the map 
$x \mapsto \ell_{t}(x,z)$ is $\alpha_{t}$-strongly convex, where
$\alpha_t>0$, for all $t \in \mathbb{N}_0$.
\end{assumption}

\begin{assumption}(\textbf{\textit{Joint smoothness}})
\label{as:jointSmoothness}
For all $t \in \mathbb{N}_0$,
$ x\mapsto\nabla_{x}\ell_{t}(x,z)$ is $\beta_{t}$-Lipschitz continuous for
all $z \in \mathcal{Z}_t$, and $z\mapsto\nabla_{x}\ell_{t}(x,z)$ is 
$\beta_{t}$-Lipschitz continuous for all $x \in \real^d$.
\end{assumption}

\begin{assumption}(\textbf{\textit{Distributional Sensitivity}})
\label{as:distributionalSensitivity}
For all $t \in \mathbb{N}_0$, there exists $\varepsilon_{t}>0$ such that
\begin{align}
W_{1}(D_{t}(x),D_{t}(x')) \leq \varepsilon_{t} \|x - x'\|_{2}
\end{align}
for any $x,x'\in\mathbb{R}^d$. \QEDB
\end{assumption}

\begin{assumption}(\textbf{\textit{Convex Constraint Set}})
\label{as:convexConstraints}
For all $t\in\mathbb{N}_{0}$, the set $C_{t}$ is closed and convex.
\QEDB
\end{assumption}

\subsection{Performatively stable points}
Since the objective function and the distribution in
\eqref{problemStatement} both depend on the decision variable $x$, the problem~\eqref{problemStatement} is intractable in general, even when the loss is convex.
For this reason, we follow the approach of~\cite{perdomo2020performative,drusvyatskiy2020stochastic} and seek optimization 
algorithms that can determine the performatively stable point, defined as 
follows:
\begin{equation}
\label{performativelyStable}
\bar{x}_{t} \in \arg\min_{x\in C_{t}} \underset{z\sim D_{t}(\bar{x}_{t})}{\EX} \left[ \ell_{t} (x,z)  \right] \, . 
\end{equation}
Convergence to a performatively stable  point is desirable because it 
guarantees that $\bar{x}_{t}$ is optimal for the distribution that it  
induces on $z$. The following result, adapted 
from~\cite[Prop. 3.3]{drusvyatskiy2020stochastic}, establishes 
existence and uniqueness of a performatively stable  point.

\begin{lemma}{\textbf{\textit{
(Existence of Performatively Stable Points) \textit{\cite[Prop. 3.3]{drusvyatskiy2020stochastic}}})}}
\label{lem:StableOptima}
Let Assumptions \ref{as:strongConvexity}-\ref{as:convexConstraints} 
hold, and suppose  that 
$\frac{\varepsilon_{t}\beta_{t}}{\alpha_{t}}<1$ for all 
$t \in \mathbb{N}_0$. Then, a sequence of performatively stable 
points $\{\bar{x}_{t}\}_{t \in \mathbb{N}_0}$ exists and is unique. 
\end{lemma}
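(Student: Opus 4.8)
The plan is to recast the defining condition~\eqref{performativelyStable} as a fixed-point equation and invoke the Banach fixed-point theorem. For each fixed $y \in \mathbb{R}^d$, define the auxiliary minimizer
\[
\phi_t(y) := \arg\min_{x \in C_t} \underset{z \sim D_t(y)}{\mathbb{E}}[\ell_t(x,z)],
\]
so that $\bar{x}_t$ is performatively stable if and only if $\bar{x}_t = \phi_t(\bar{x}_t)$. First I would argue that $\phi_t$ is well defined and single-valued: by Assumption~\ref{as:strongConvexity} the integrand is $\alpha_t$-strongly convex in $x$ for every $z$, hence so is the expectation, and minimizing a strongly convex function over the closed convex set $C_t$ (Assumption~\ref{as:convexConstraints}) yields a unique minimizer. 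Thus $\phi_t : \mathbb{R}^d \to C_t$ is a genuine map.

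The core of the argument is to show that $\phi_t$ is a contraction with modulus $q_t := \varepsilon_t \beta_t / \alpha_t$. Writing $F_y(x) := \mathbb{E}_{z \sim D_t(y)}[\nabla_x \ell_t(x,z)]$ for the expected gradient, I would use the first-order optimality (variational inequality) characterization of $x := \phi_t(y)$ and $x' := \phi_t(y')$: testing each inequality against the other's minimizer and summing yields $\langle F_y(x) - F_{y'}(x'), x - x'\rangle \leq 0$. Splitting this as $\langle F_y(x) - F_y(x'), x-x'\rangle + \langle F_y(x') - F_{y'}(x'), x - x'\rangle \leq 0$, the first term is bounded below by $\alpha_t \|x-x'\|^2$ (monotonicity of $F_y$ from strong convexity), while the second is bounded below by $-\|F_y(x') - F_{y'}(x')\|\,\|x-x'\|$ via Cauchy-Schwarz.

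The key estimate then controls the sensitivity of the expected gradient to the inducing distribution: for any fixed $x$, $\|F_y(x) - F_{y'}(x)\| \leq \beta_t W_1(D_t(y), D_t(y'))$. This follows by testing against an arbitrary unit vector $v$, noting that $z \mapsto \langle v, \nabla_x \ell_t(x,z)\rangle$ is $\beta_t$-Lipschitz by the joint smoothness in $z$ (Assumption~\ref{as:jointSmoothness}), and applying the Kantorovich-Rubinstein duality~\eqref{def:Wasserstein}; combining with the distributional sensitivity (Assumption~\ref{as:distributionalSensitivity}) gives $\|F_y(x') - F_{y'}(x')\| \leq \beta_t \varepsilon_t \|y - y'\|$. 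Substituting back produces $\alpha_t \|x - x'\|^2 \leq \beta_t \varepsilon_t \|y-y'\|\,\|x-x'\|$, hence $\|\phi_t(y) - \phi_t(y')\| \leq q_t \|y - y'\|$. Since $q_t < 1$ by hypothesis and $\phi_t$ is a contraction on the complete space $\mathbb{R}^d$ with image contained in $C_t$, the Banach fixed-point theorem yields a unique fixed point $\bar{x}_t \in C_t$ for each $t$, giving the desired unique sequence.

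I expect the main obstacle to be the vector-valued sensitivity estimate: in particular, justifying the interchange of expectation and gradient so that $F_y(x)$ is genuinely the gradient of the expected loss, and passing from the scalar Kantorovich-Rubinstein bound to the Euclidean-norm bound on $F_y(x) - F_{y'}(x)$ by supremizing over unit test directions. The remaining steps are the standard monotonicity and contraction manipulations.
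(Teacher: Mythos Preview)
Your argument is correct and is precisely the Banach fixed-point proof underlying the cited result. The paper does not supply its own proof of this lemma at all: it states the result with the attribution \cite[Prop.~3.3]{drusvyatskiy2020stochastic} and moves on, so there is nothing to compare beyond noting that your proposal reconstructs the standard contraction argument from that reference (your gradient-sensitivity estimate is exactly what the paper later records as Lemma~\ref{lem:deviation}).
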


In general, performatively stable points may not coincide with the optimizers of the original problem \eqref{problemStatement}. However, an explicit error 
bound can be derived, as formally stated next.

\begin{lemma}{\textbf{\textit{
(Error of Performatively Stable Points \cite{perdomo2020performative})}}}
\label{lem:distanceStableOptima}
Suppose that the function $z\mapsto\ell_{t}(x,z)$ is  $\gamma_{t}$-Lipschitz continuous
for all $x \in \real^d$ and 
$t\in \mathbb{N}_0$. Then, under the same assumptions of Lemma~\ref{lem:StableOptima}, it holds that 
\begin{equation}
\label{boundedDistance}
    \|\bar{x}_{t} - x_{t}^{*}\| \leq 2 \varepsilon_{t}\gamma_{t} \alpha_{t}^{-1}, \text{ for all } \in \mathbb{N}_0.
\end{equation}
\end{lemma}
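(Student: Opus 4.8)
The plan is to work with the \emph{decoupled objective} $G_t(x,y) := \EX_{z \sim D_t(y)}[\ell_t(x,z)]$, which separates the dependence of the loss on the decision through the argument of $\ell_t$ (first slot) from its dependence through the inducing distribution (second slot). In this notation the performatively stable point is the fixed point $\bar{x}_t = \arg\min_{x \in C_t} G_t(x, \bar{x}_t)$, while the performative optimum $x_t^*$ minimizes the performative risk $x \mapsto G_t(x,x)$ over $C_t$, so in particular $G_t(x_t^*, x_t^*) \le G_t(\bar{x}_t, \bar{x}_t)$.

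First I would exploit strong convexity in the first slot. For the fixed second argument $\bar{x}_t$, the map $x \mapsto G_t(x, \bar{x}_t)$ is $\alpha_t$-strongly convex, since Assumption~\ref{as:strongConvexity} holds for each $z$ with a uniform modulus and strong convexity is preserved under expectation, and $\bar{x}_t$ is its constrained minimizer. Strong convexity evaluated at the minimizer then gives the quadratic growth bound $\tfrac{\alpha_t}{2}\norm{x_t^* - \bar{x}_t}^2 \le G_t(x_t^*, \bar{x}_t) - G_t(\bar{x}_t, \bar{x}_t)$, obtained from the first-order optimality inequality together with the strong-convexity lower model.

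Next I would control the right-hand side by splitting it across the two slots, writing $G_t(x_t^*, \bar{x}_t) - G_t(\bar{x}_t, \bar{x}_t) = \big[G_t(x_t^*,\bar{x}_t) - G_t(x_t^*, x_t^*)\big] + \big[G_t(x_t^*, x_t^*) - G_t(\bar{x}_t,\bar{x}_t)\big]$. The second bracket is nonpositive by performative optimality of $x_t^*$. For the first bracket, the $\gamma_t$-Lipschitz continuity of $z \mapsto \ell_t(x,z)$ together with the Kantorovich--Rubenstein characterization \eqref{def:Wasserstein} yields $|G_t(x,y) - G_t(x,y')| \le \gamma_t\, W_1(D_t(y), D_t(y'))$, which combined with the distributional sensitivity of Assumption~\ref{as:distributionalSensitivity} bounds it by $\gamma_t \varepsilon_t \norm{\bar{x}_t - x_t^*}$. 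Substituting and cancelling one factor of $\norm{x_t^* - \bar{x}_t}$ (the claimed bound being trivial when this vanishes) gives $\tfrac{\alpha_t}{2}\norm{x_t^* - \bar{x}_t} \le \gamma_t \varepsilon_t$, which is exactly the asserted inequality.

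The conceptual obstacle is that the performative risk $x \mapsto G_t(x,x)$ is in general nonconvex, so one cannot directly invoke strong convexity of the objective that $x_t^*$ actually minimizes; the decoupling is precisely what lets me apply strong convexity in a single slot while paying only a first-order Wasserstein price for the mismatch in the other slot. The only quantitative inputs are this strong-convexity growth inequality and the Lipschitz-in-distribution estimate, so beyond assembling them in the right order there is no delicate calculation to carry out.
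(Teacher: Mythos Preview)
Your argument is correct and is precisely the proof of \cite[Thm.~4.3]{perdomo2020performative} that the paper invokes (adapted to the time-varying constrained setting): strong-convexity quadratic growth at the constrained minimizer $\bar{x}_t$, performative optimality of $x_t^*$ to drop one bracket, and the Kantorovich--Rubenstein plus $\varepsilon_t$-sensitivity bound on the other. One cosmetic caution: the symbol $G_t$ is already used in this paper for the projected-gradient map, so if you insert this argument verbatim you should rename your decoupled objective (e.g., use $f_t(x,D_t(y))$ as in the paper's notation).
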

The proof Lemma \ref{lem:distanceStableOptima} follows from
\cite[Thm 3.5, Thm 4.3]{perdomo2020performative}.
In the remainder of this paper, we assume that the assumptions of
Lemma~\ref{lem:StableOptima} are satisfied, so that the performatively 
stable point sequence is unique.
We illustrate the difference between $\bar{x}_{t}$ and 
$x_{t}^{*}$ in the following example.

\begin{example}
\label{ex:distinct}
Consider an instance of~\eqref{problemStatement} where 
$\ell(x,z) = x^2 + z,$ $C_t = \real$,  
$D_t(x) = \mathcal{N}(\mu_t x,\sigma_t^{2})$, $\mu_t, \sigma_t>0$. 
In this case, the objective can be specified in closed form as: 
$\underset{z\sim D_t(x)}{\EX} \left[x^2 + z \right] = x^2 + \mu_{t}x$, 
and thus the unique performatively optimal point is  given by
$x_t^{*} = -\mu_t/2$. 
To determine the performatively stable point, notice that 
$\nabla_x \ell(x,z)  = 2x$, and thus $\bar{x}_t$ satisfies
$\underset{z\sim D_{t}(\bar{x}_{t})}{\EX} \left[2\bar{x}_t
\right] = 0$, which 
implies $\bar{x}_t=0$. The bound in \eqref{boundedDistance} thus holds 
by noting that $\varepsilon_{t}=\mu_{t}$, $\gamma_t=1$, and 
$\alpha_t=2$.
\QEDB
 \end{example}

\subsection{Online projected gradient descent}
We now propose an OPGD that seeks to track the trajectory of 
the performatively stable optimizer 
$\{\bar{x}_{t}\}_{t \in \mathbb{N}_0}$.
To this end, in what follows we adopt the following  notation:
\begin{equation}
% \label{eq:decoupledObjective}
f_{t}(x,\nu) :=  \underset{z\sim \nu}{\EX} \left[ \ell_{t} (x,z)  \right],
\end{equation}
for any $x\in\real^{d}$, $\nu\in\mathcal{P}(M)$, and 
$t \in \mathbb{N}_0$.
Notice that when $\nu$ is a distribution induced by the decision 
variable $y$, namely $\nu=D_{t}(y)$, we will use the notation 
$f_{t}(x,D_{t}(y))$. Moreover, we denote by $\nabla f_{t}(x,\nu)$ the gradient of $f_{t}(x,\nu) $ (we also note  that, according to the  dominated convergence theorem, the expectation and 
gradient operators can be interchanged).

The OPGD amounts to the following step at each $t \in \mathbb{N}_0$:
\begin{align}
\label{eq:OPGD}
x_{t+1} &= G_{t}(x_t, D_{t}(x_t)), \,\,\,\, 
\end{align}
where $G_{t}(x_t,\nu) := \proj_{C_{t}}\left(x_t - \eta_{t}\nabla f_{t}(x_t,\nu) \right)$, with $\eta_{t} >0$ denoting a stepsize.

First, we note that a performatively stable point is a fixed
point of the algorithmic map \eqref{eq:OPGD}, namely, 
$\bar{x}_{t} = G_{t}(\bar{x}_{t},D_{t}(\bar{x}_t))$.
Next, we focus on characterizing the error between the 
updates~\eqref{eq:OPGD} and the performatively stable points 
$\{\bar x_t\}_{t\in \mathbb{N}_0}$.
To this aim, we denote the temporal drift in the performatively stable points as $ \varphi_{t} := \|\bar{x}_{t+1}-\bar{x}_{t}\|,$ and the tracking error relative to the performatively stable points as  $e_{t} := \|x_{t}-\bar{x}_{t}\|$.  Our error bound for  OPGD is presented~next.

\iffalse
We denote the temporal drift in the performatively stable points as:
\begin{align}
 \varphi_{t} := \|\bar{x}_{t+1}-\bar{x}_{t}\|,
\end{align}
for all $t \in \mathbb{N}_0$. We also denote the tracking error relative to the sequence $\{\bar{x}_t\}_{t \geq 0}$ as $e_{t} := \|x_{t}-\bar{x}_{t}\|$. 
\blue{Our error bound for  OPGD is presented next.}
\fi
\vspace{.1cm}

\begin{theorem}{\textbf{\textit{(Tracking Error of OPGD)}}}
\label{thrm:OPGD}
Let Assumptions \ref{as:strongConvexity}-\ref{as:convexConstraints}
hold, suppose  that $\frac{\varepsilon_{t}\beta_{t}}{\alpha_{t}}<1$ for
all $t \in \mathbb{N}_0$, and let $\{x_t\}_{t\in \mathbb{N}_0}$ 
denote a sequence generated by \eqref{eq:OPGD}. Then, for all 
$t\in \mathbb{N}_0$, the error $e_t = \|x_{t}-\bar{x}_{t}\|$ satisfies:
\begin{equation}
\label{RRM_contr}
e_{t+1} \leq \ a_{t}e_{0} + \sum _{i = 0 }^{t} b_{i} \varphi_{i},
\end{equation}
where $a_{t} := \prod_{i=1}^{t} \rho_{i}+\eta_{i}\beta_{i}\varepsilon_i$, 
\[
  b_{i} :=
  \begin{cases}
                                   1 & \text{if $i=t$}, \\
  \prod_{k=i+1}^{t} \rho_{k} + \eta_{k}\beta_{k}\varepsilon_{k} & \text{if $i\neq t$},
  \end{cases}
\]
and $\rho_{t} := \max\{ |1-\eta_{t}\alpha_{t} |, |1-\eta_{t}\beta_{t} | \}$. 
Moreover, if 
\begin{align}
\label{stepSize}
\eta_{t} \in \bigg[ \frac{1-r}{\alpha_{t}+\beta_{t}\varepsilon_{t}} , \frac{1+r}{\beta_{t}(1+\varepsilon_{t})} \bigg]
\text{ for all } t\in \mathbb{N}_0,
\end{align}
for some $r \in (0,1)$, then 
$\tilde \lambda:=\sup_{t \geq 0} \{\rho_{t}+\eta_{t}\beta_{t}\varepsilon_t\}\leq r$ 
and 
\begin{align}
\label{eq:limsup}
\limsup_{t \rightarrow +\infty} e_{t} \leq (1 - \tilde \lambda)^{-1} 
\sup_{t \geq 0} \{\varphi_t\},
\end{align}
where $\tilde{\varphi}:=\sup_{t\geq0} \{\varphi_{t}\}$.
\end{theorem}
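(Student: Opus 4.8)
The plan is to collapse the entire statement into the scalar recursion $e_{t+1} \le \lambda_t e_t + \varphi_t$, where $\lambda_t := \rho_t + \eta_t\beta_t\varepsilon_t$, and then to unroll this recursion for \eqref{RRM_contr} and pass to the limit for \eqref{eq:limsup}. The conceptual core is the one-step contraction estimate; once it is in place, the telescoping that produces $a_t$ and $b_i$ and the geometric-series argument behind \eqref{eq:limsup} are mechanical.

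To build the one-step bound, I first observe that for any fixed measure $\nu$ the averaged loss $x \mapsto f_t(x,\nu)$ inherits $\alpha_t$-strong convexity from Assumption~\ref{as:strongConvexity} and $\beta_t$-smoothness from Assumption~\ref{as:jointSmoothness}, since taking expectations preserves both properties. Consequently the gradient-step map $x \mapsto x - \eta_t\nabla f_t(x,\nu)$ is Lipschitz with modulus $\rho_t = \max\{|1-\eta_t\alpha_t|,\,|1-\eta_t\beta_t|\}$. Next, using the fixed-point identity $\bar{x}_t = G_t(\bar{x}_t, D_t(\bar{x}_t))$ and the triangle inequality, I split $e_{t+1} \le \norm{x_{t+1}-\bar{x}_t} + \varphi_t$. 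Because $\proj_{C_t}$ is nonexpansive (Assumption~\ref{as:convexConstraints}), inserting $\pm\,\eta_t\nabla f_t(x_t, D_t(\bar{x}_t))$ decomposes $\norm{x_{t+1}-\bar{x}_t}$ into a same-distribution gradient-step term, bounded by $\rho_t e_t$ through the contraction just established, plus a distribution-shift term $\eta_t\norm{\nabla f_t(x_t,D_t(x_t)) - \nabla f_t(x_t,D_t(\bar{x}_t))}$.

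The distribution-shift term is where the performative structure enters. I would bound it by pairing the vector-valued gradient difference with the unit vector $u$ aligned to it, so that the quantity becomes $\EX_{z\sim D_t(x_t)}[\langle u,\nabla_x\ell_t(x_t,z)\rangle] - \EX_{z\sim D_t(\bar{x}_t)}[\langle u,\nabla_x\ell_t(x_t,z)\rangle]$; the scalar integrand is $\beta_t$-Lipschitz in $z$ by Assumption~\ref{as:jointSmoothness}, so the Kantorovich--Rubinstein duality \eqref{def:Wasserstein} bounds it by $\beta_t W_1(D_t(x_t),D_t(\bar{x}_t))$, and Assumption~\ref{as:distributionalSensitivity} gives $\le \eta_t\beta_t\varepsilon_t e_t$. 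Combining yields $e_{t+1}\le(\rho_t+\eta_t\beta_t\varepsilon_t)e_t+\varphi_t = \lambda_t e_t + \varphi_t$, and an induction on $t$ telescopes the factors $\lambda_i$ into the products defining $a_t$ and $b_i$, giving \eqref{RRM_contr} with the empty product yielding $b_t = 1$.

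For the asymptotic claim I would verify that the prescribed step-size window forces $\lambda_t \le r$ for every $t$. Writing $\rho_t$ explicitly and analyzing the signs of $1-\eta_t\alpha_t$ and $1-\eta_t\beta_t$ (using $\alpha_t \le \beta_t$), the endpoints of the interval are chosen precisely as the values at which one branch of $\rho_t$ augmented by the shift $\eta_t\beta_t\varepsilon_t$ meets $r$: the upper endpoint $\tfrac{1+r}{\beta_t(1+\varepsilon_t)}$ caps the $|1-\eta_t\beta_t|$ branch and the lower endpoint caps the $|1-\eta_t\alpha_t|$ branch, so $\tilde\lambda = \sup_t\lambda_t \le r < 1$. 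Finally, from $e_{t+1}\le \tilde\lambda e_t + \tilde\varphi$ I iterate to get $e_{t+1}\le \tilde\lambda^{t+1}e_0 + \tilde\varphi\sum_{j=0}^{t}\tilde\lambda^{j}$, and since $\tilde\lambda<1$ the transient vanishes and the sum converges to $(1-\tilde\lambda)^{-1}\tilde\varphi$, which is exactly \eqref{eq:limsup}. I expect the main obstacle to be the distribution-shift estimate --- recasting the vector gradient gap as a scalar $1$-Lipschitz test function so \eqref{def:Wasserstein} applies cleanly, and isolating exactly the modulus $\rho_t$ --- whereas the step-size verification is elementary but sign-sensitive.
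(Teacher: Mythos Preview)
Your proposal is correct and follows essentially the same route as the paper: the paper also reduces everything to the one-step recursion $e_{t+1}\le(\rho_t+\eta_t\beta_t\varepsilon_t)e_t+\varphi_t$ by splitting $\|G_t(x_t,D_t(x_t))-G_t(\bar{x}_t,D_t(\bar{x}_t))\|$ into a distribution-shift term (bounded via Kantorovich--Rubinstein plus $\varepsilon_t$-sensitivity, exactly your unit-vector argument packaged as Lemma~\ref{lem:deviation}) and a same-distribution contraction term with modulus $\rho_t$ (Lemma~\ref{lem:OPGD}), then telescopes and analyzes the two branches of $\rho_t$ under the step-size window to get $\tilde\lambda\le r$ and the geometric-series bound. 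The only cosmetic difference is that the paper inserts $G_t(x_t,D_t(\bar{x}_t))$ before invoking nonexpansiveness, whereas you strip the projection first and then insert $\pm\,\eta_t\nabla f_t(x_t,D_t(\bar{x}_t))$; the resulting estimates are identical.
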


\vspace{.1cm}

Before presenting the proof, some remarks are in order.

\begin{remark}
By application of Lemma \ref{lem:distanceStableOptima}, OPGD guarantees
that the error between the algorithmic updates and the performatively 
optimal points is bounded at all times. Precisely, the following 
estimate holds:
  $\limsup_{t \rightarrow +\infty} \|x_t - x_t^*\| \leq (1 - \tilde{\lambda})^{-1}\tilde{\varphi} + 2 \sup_{t \geq 0} \{\varepsilon_{t}\gamma_{t} \alpha_{t}^{-1}\}.$
\QEDB
\end{remark}

\vspace{.1cm}

\begin{remark}
When~\eqref{stepSize} holds, one can write the bound $e_{t+1} \leq a_{t}e_{0} + (1 - \tilde{\lambda})^{-1} \sup_{i}\{ \varphi_{i}\}$; this is  an exponential input-to-state-stability (E-ISS) result~\cite{jiang2001input}, where $\{\bar{x}_t\}$ are the equilibria of~\eqref{eq:OPGD} and $\varphi_{i}$ is treated as a disturbance. ISS implies that $e_t$ is ultimately bounded as in~\eqref{eq:limsup}. 
\QEDB
\end{remark}

Next, we present the proof of Theorem~\ref{thrm:OPGD}. 
The following lemmas are instrumental.

\begin{lemma}(\textbf{\textit{Gradient Deviations}})
\label{lem:deviation}
Under Assumption 2, for any $t\in \mathbb{N}_0$, $x\in\real^{d}$, and  
measures $\mu,\nu\in\mathcal{P}(M)$, the following bound holds:
\begin{equation}
\label{eq:deviation}
\|\nabla f_{t}(x,\mu) - \nabla f_{t}(x,\nu) \| \leq \beta_{t}W_{1}(\mu,\nu).
\end{equation}
\end{lemma}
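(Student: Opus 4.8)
The plan is to reduce the statement about the vector-valued gradient to a scalar one to which the Kantorovich--Rubinstein dual formulation \eqref{def:Wasserstein} of $W_1$ directly applies. Since by the dominated convergence theorem (as already noted in the excerpt) differentiation and expectation commute, I would first write
\[
\nabla f_{t}(x,\mu) - \nabla f_{t}(x,\nu) = \underset{z\sim\mu}{\EX}\left[\nabla_{x}\ell_{t}(x,z)\right] - \underset{z\sim\nu}{\EX}\left[\nabla_{x}\ell_{t}(x,z)\right],
\]
so that the task becomes bounding the Euclidean norm of a difference of expectations of the $\real^{d}$-valued map $z \mapsto \nabla_{x}\ell_{t}(x,z)$.

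The obstacle is that \eqref{def:Wasserstein} is phrased for scalar $1$-Lipschitz test functions, whereas $\nabla_{x}\ell_{t}(x,\cdot)$ is vector-valued. To bridge this, I would fix an arbitrary unit vector $u \in \real^{d}$ and introduce the scalar function $g_{u}(z) := \beta_{t}^{-1}\langle u, \nabla_{x}\ell_{t}(x,z)\rangle$. Assumption~\ref{as:jointSmoothness} ensures that $z \mapsto \nabla_{x}\ell_{t}(x,z)$ is $\beta_{t}$-Lipschitz, hence by the Cauchy--Schwarz inequality $g_{u}$ is $1$-Lipschitz, i.e.\ $g_{u} \in \text{Lip}_{1}$. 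Applying \eqref{def:Wasserstein} to $g_{u}$ then yields
\[
\langle u, \nabla f_{t}(x,\mu) - \nabla f_{t}(x,\nu)\rangle = \beta_{t}\left(\underset{z\sim\mu}{\EX}\left[g_{u}(z)\right] - \underset{z\sim\nu}{\EX}\left[g_{u}(z)\right]\right) \leq \beta_{t} W_{1}(\mu,\nu).
\]

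Finally, I would recover the Euclidean norm by choosing $u$ to align with the difference vector. Assuming $\nabla f_{t}(x,\mu) \neq \nabla f_{t}(x,\nu)$ (the bound being trivial otherwise), set $u = (\nabla f_{t}(x,\mu) - \nabla f_{t}(x,\nu))/\norm{\nabla f_{t}(x,\mu) - \nabla f_{t}(x,\nu)}$; then the left-hand side of the last display equals $\norm{\nabla f_{t}(x,\mu) - \nabla f_{t}(x,\nu)}$, giving exactly \eqref{eq:deviation}. I expect the only genuinely delicate point to be this vector-to-scalar reduction through a supporting unit vector, together with verifying that normalizing by $\beta_{t}$ renders $g_{u}$ an admissible test function in the dual; the interchange of gradient and expectation is already granted, and the Lipschitz estimate on $g_{u}$ is an immediate consequence of Assumption~\ref{as:jointSmoothness}.
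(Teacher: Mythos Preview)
Your proof is correct and is precisely the standard Kantorovich--Rubinstein argument: the paper itself does not spell out the details but simply defers to \cite[Lemma~2.1]{drusvyatskiy2020stochastic}, whose proof proceeds in exactly this way (reducing the vector case to scalar $1$-Lipschitz test functions via a unit direction and invoking \eqref{def:Wasserstein}). Your write-up is thus a faithful expansion of what the paper cites.
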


\begin{lemma}(\textbf{\textit{Contractive Map}})
\label{lem:OPGD}
Let Assumptions \ref{as:strongConvexity}-\ref{as:jointSmoothness} 
and \ref{as:convexConstraints} hold.
For any $\nu\in\mathcal{P}(M)$, the map $x\mapsto G_{t}(x,\nu)$ is 
Lipschitz continuous, namely, for any $x,y\in\mathbb{R}^d$:
\begin{equation}
\| G_{t}(x,\nu) - G_{t} (y,\nu) \Vert \leq 
\rho_{t} \Vert x-y \|,
\end{equation}
where 
$\rho_{t} = \max\{ |1-\eta_{t}\alpha_{t} |, |1-\eta_{t}\beta_{t} | \}$. 
Moreover, if $\rho_{t}<1$ for all $t \in \mathbb{N}_0$, then 
$\bar{x}_{t}$ is the unique fixed point of \eqref{eq:OPGD}.
\end{lemma}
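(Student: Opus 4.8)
The plan is to factor the algorithmic map as $G_t(x,\nu)=\proj_{C_t}(T_t(x))$, where $T_t(x):=x-\eta_t\nabla f_t(x,\nu)$ is the gradient step, and to bound the Lipschitz constant of each factor separately; the product of the two will give the claimed $\rho_t$, with the entire factor $\rho_t$ coming from $T_t$. First I would transfer the per-sample regularity to $f_t(\cdot,\nu)$. Since $f_t(x,\nu)=\EX_{z\sim\nu}[\ell_t(x,z)]$ and, as already noted, the gradient and expectation commute by dominated convergence, the $\alpha_t$-strong convexity of $x\mapsto\ell_t(x,z)$ for each fixed $z$ (Assumption~\ref{as:strongConvexity}) is preserved under averaging, so $x\mapsto f_t(x,\nu)$ is $\alpha_t$-strongly convex; likewise the $\beta_t$-Lipschitz continuity of $x\mapsto\nabla_x\ell_t(x,z)$ for each fixed $z$ (Assumption~\ref{as:jointSmoothness}) is preserved under averaging, so $\nabla f_t(\cdot,\nu)$ is $\beta_t$-Lipschitz, i.e. $f_t(\cdot,\nu)$ is $\beta_t$-smooth. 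Only the first argument of $\ell_t$ is used here, which is why Assumption~\ref{as:distributionalSensitivity} is not needed for the Lipschitz claim.

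The core step is to show that $T_t$ is $\rho_t$-Lipschitz, with $\rho_t=\max\{|1-\eta_t\alpha_t|,|1-\eta_t\beta_t|\}$, for any $\alpha_t$-strongly convex and $\beta_t$-smooth $f_t(\cdot,\nu)$. Writing $w:=x-y$ and $p:=\nabla f_t(x,\nu)-\nabla f_t(y,\nu)$, I would expand $\|T_t(x)-T_t(y)\|^2=\|w\|^2-2\eta_t\langle p,w\rangle+\eta_t^2\|p\|^2$ and invoke the standard interpolation inequalities for strongly convex, smooth functions: the monotonicity bound $\langle p,w\rangle\geq\alpha_t\|w\|^2$, the smoothness bound $\|p\|\leq\beta_t\|w\|$, and the co-coercivity of the shifted map $x\mapsto\nabla f_t(x,\nu)-\alpha_t x$ (the gradient of a convex $(\beta_t-\alpha_t)$-smooth function). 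Equivalently, when $f_t(\cdot,\nu)$ is twice differentiable one argues directly that $\alpha_t I\preceq\nabla^2 f_t(\cdot,\nu)\preceq\beta_t I$, so $\|I-\eta_t\nabla^2 f_t(\cdot,\nu)\|\leq\rho_t$ pointwise, and the estimate follows from the integral mean-value identity $p=\big(\int_0^1\nabla^2 f_t(y+sw,\nu)\,ds\big)w$. The delicate part is the bookkeeping: one must track the signs of $1-\eta_t\alpha_t$ and $1-\eta_t\beta_t$ and verify that the resulting quadratic in $\|p\|^2$ is maximized at one of the endpoints $\|p\|^2\in\{\alpha_t^2\|w\|^2,\beta_t^2\|w\|^2\}$, which is exactly what produces the $\max\{\cdot,\cdot\}$ form of $\rho_t$. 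I expect this case analysis to be the main obstacle.

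Next, by Assumption~\ref{as:convexConstraints} the set $C_t$ is closed and convex, so $\proj_{C_t}$ is nonexpansive, $\|\proj_{C_t}(u)-\proj_{C_t}(v)\|\leq\|u-v\|$. Composing the two factors gives $\|G_t(x,\nu)-G_t(y,\nu)\|=\|\proj_{C_t}(T_t(x))-\proj_{C_t}(T_t(y))\|\leq\|T_t(x)-T_t(y)\|\leq\rho_t\|x-y\|$, which is the asserted contraction, valid uniformly in $\nu\in\mathcal{P}(M)$.

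Finally, for the fixed-point claim, suppose $\rho_t<1$. Freezing the distribution at $D_t(\bar{x}_t)$, the map $x\mapsto G_t(x,D_t(\bar{x}_t))$ is a $\rho_t$-contraction on the complete space $\real^d$, so by the Banach fixed-point theorem it admits a unique fixed point. Since $\bar{x}_t$ minimizes $x\mapsto f_t(x,D_t(\bar{x}_t))$ over $C_t$ by definition~\eqref{performativelyStable}, the first-order optimality condition for constrained minimization expressed through the projection yields $\bar{x}_t=\proj_{C_t}(\bar{x}_t-\eta_t\nabla f_t(\bar{x}_t,D_t(\bar{x}_t)))=G_t(\bar{x}_t,D_t(\bar{x}_t))$. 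Hence $\bar{x}_t$ satisfies the fixed-point equation of~\eqref{eq:OPGD} and, by contractivity, is the unique such point.
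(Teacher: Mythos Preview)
Your proposal is correct and follows exactly the standard route the paper alludes to when it writes that ``the proof of Lemma~\ref{lem:OPGD} is standard and is omitted due to space limitations'': transfer strong convexity and smoothness from $\ell_t(\cdot,z)$ to $f_t(\cdot,\nu)$ by averaging, bound the Lipschitz constant of the gradient step $x\mapsto x-\eta_t\nabla f_t(x,\nu)$ by $\max\{|1-\eta_t\alpha_t|,|1-\eta_t\beta_t|\}$, compose with the nonexpansive projection onto the closed convex set $C_t$, and invoke Banach's theorem together with the projection characterization of constrained optimality for the fixed-point claim. There is nothing to add or correct.
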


The proof of Lemma~\ref{lem:deviation} follows by iterating  the reasoning in \cite[Lemma 2.1]{drusvyatskiy2020stochastic} for all 
$t \in \mathbb{N}_0$; the proof of lemma~\ref{lem:OPGD} is standard and is omitted due to space limitations. 

\begin{pfof}{Theorem~\ref{thrm:OPGD}}
Note that $x_t \in C_t$ for all $t \in \mathbb{N}_{0}$ directly follows by definition of 
Euclidean projection. By using the triangle inequality, we find that
\begin{align*}
e_{t+1} 
& \leq \|x_{t+1} - \bar{x}_{t}\| + \|\bar{x}_{t} - \bar{x}_{t+1}\| \\
&   =  \| G_{t}(x_{t},D_{t}(x_t)) - G_{t}(\bar{x}_{t}, D_{t}(\bar{x}_{t}))\| + \varphi_{t} \\ 
&\leq \| G_{t}(x_{t},D_{t}(x_t)) - G_{t}(x_{t}, D_{t}(\bar{x}_{t}))\|\\ 
& \qquad + \| G_{t}(x_{t},D_{t}(\bar{x}_{t})) - G_{t}(\bar{x}_{t}, D_{t}(\bar{x}_{t}))\| + \varphi_{t},
\end{align*}
where the first identity follows from the definition of 
$G_t(\cdot,\cdot)$ and the second inequality follows by adding and 
subtracting $G_{t}(x_{t}, D_{t}(\bar{x}_{t}))$.
Applying~\eqref{eq:deviation} and Lemma \ref{lem:OPGD} yields:
\begin{align}
\label{eq:aux_Gbound}
e_{t+1}
& \leq \eta_{t} \|\nabla f_{t}(x_{t},x_{t})  - \nabla f_{t} (x_{t},\bar{x}_{t} ) \| \nonumber\\
& \qquad + \| G_{t}(x_{t},D_{t}(\bar{x}_{t})) - G_{t}(\bar{x}_{t}, D_{t}(\bar{x}_{t}))\| + \varphi_{t} \nonumber\\ 
& \leq  \eta_{t}\beta_{t} W_{1}(D_{t}(x_t),D_{t}(\bar{x}_t)) + \rho_{t} e_{t} + \varphi_t \nonumber\\ 
& \leq \eta_{t}\beta_{t}\varepsilon_{t}e_{t} + \rho_{t}e_{t} +\varphi_{t} \nonumber\\ 
& = (\rho_{t} + \eta_{t}\beta_t\varepsilon_{t})e_{t} + \varphi_{t}.
\end{align}
Thus we obtain the following by expanding the recursion:
\begin{align*}
e_{t+1} & \leq \left(\prod_{i=0}^{t} \lambda_{i} \right)e_{0} + \varphi_{t} + \sum_{i=0}^{t-1} \left ( \prod_{k=i+1}^{t} \lambda_{k} \right) \varphi_{i},
\end{align*}
where we defined
$\lambda_{t} := \rho_{t} + \eta_{t}\beta_{t}\varepsilon_{t}$. The bound \eqref{RRM_contr} then follows by definition of the sequences $\{a_{t}\}$ and $\{b_{t}\}$.

To prove \eqref{eq:limsup}, we show that $\sup_{t}\lambda_{t}<1$ for appropriate $\eta_{t}$. Fix $r\in(0,1)$. Then, by the definition of $\rho_t$, $\lambda_{t}\leq r$ 
holds if the following two conditions are satisfied simultaneously:
\begin{align}
\label{eqn:doubleIneq}
|1-\eta_{t}\alpha_{t} | + \eta_{t}\beta_{t}\varepsilon_{t} \leq r,  
\  \text{and} \ 
|1-\eta_{t}\beta_{t} | + \eta_{t}\beta_{t}\varepsilon_{t} &\leq r.
\end{align}
The first inequality holds if and only if 
$- r +  \eta_{t}\beta_{t}\varepsilon_{t}  <  1-\eta_{t}\alpha_{t} < r - \eta_{t}\beta_{t}\varepsilon_{t}$ or, equivalently,  
$1- r \leq \eta_{t}(\alpha_{t}+\beta_{t}\varepsilon_{t}) \leq 1+r$. 
The second inequality holds if and only if
$ -r + \eta_{t}\beta_{t}\varepsilon_{t} < 1- \eta_{t}\beta_{t} < r - \eta_{t}\beta_{t}\varepsilon_{t}$. 
By using $\alpha_{t}\leq \beta_{t}$, both inequalities are satisfied when 
\begin{equation*}
    \frac{1-r}{\beta_{t}(1+\varepsilon_{t})}\leq \frac{1-r}{\alpha_{t}+\beta_{t}{\varepsilon_{t}}} \leq \eta_{t} \leq  \frac{1+r}{\beta_{t}(1+\varepsilon_{t})}\leq \frac{1+r}{\alpha_{t}+\beta_{t}\varepsilon_{t}}.
\end{equation*}
Thus, to satisfy the maximum, its sufficient to enforce that $\eta_{t} \in \bigg[ \frac{1-r}{\alpha_{t}+\beta_{t}\varepsilon_{t}} , \frac{1+r}{\beta_{t}(1+\varepsilon_{t})} \bigg] $. The result~\eqref{eq:limsup} follows by utilizing the geometric series.
\end{pfof}

Finally, we observe that when the objective and constraints are time-invariant, we 
recover the result of \cite[Sec. 5]{drusvyatskiy2020stochastic} as 
formalized next.

\begin{corollary}{\bf \textit{(Tracking Error of OPGD for 
Time-Invariant Problems)}}
\label{cor:linearConvergence}
If the problem \eqref{problemStatement} is time independent and the 
assumptions in Theorem \ref{thrm:OPGD} hold, then OPGD with fixed step 
size $\eta\in(0,2/\beta(1+\varepsilon))$  converges linearly to the 
performatively stable point. 
\end{corollary}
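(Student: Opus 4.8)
The plan is to obtain this corollary as a direct specialization of Theorem~\ref{thrm:OPGD}. In the time-invariant setting all problem data lose their dependence on $t$, so I write $\alpha, \beta, \varepsilon, \eta$ and $C$ in place of $\alpha_t, \beta_t, \varepsilon_t, \eta_t$ and $C_t$. The key structural observation is that, because the cost, constraint set, and distributional map are fixed, the performatively stable point defined in~\eqref{performativelyStable} is constant, $\bar{x}_t \equiv \bar{x}$ (its existence and uniqueness are guaranteed by Lemma~\ref{lem:StableOptima} under the standing hypotheses). Consequently the temporal drift vanishes identically: $\varphi_t = \|\bar{x}_{t+1} - \bar{x}_t\| = 0$ for all $t$. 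Feeding $\varphi_t = 0$ into the one-step recursion~\eqref{eq:aux_Gbound} established in the proof of Theorem~\ref{thrm:OPGD} collapses it to the homogeneous inequality $e_{t+1} \leq \lambda\, e_t$, where $\lambda := \rho + \eta\beta\varepsilon$ and $\rho = \max\{|1-\eta\alpha|,|1-\eta\beta|\}$.

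Given the homogeneous recursion, the remaining task is to verify that the step-size range in the statement forces $\lambda < 1$, which then yields $e_t \leq \lambda^t e_0$ by a trivial induction — precisely linear (geometric) convergence of $e_t = \|x_t - \bar{x}\|$ to zero. To establish $\lambda < 1$ I would reuse the feasibility analysis already carried out in the proof of Theorem~\ref{thrm:OPGD} for the double inequality~\eqref{eqn:doubleIneq}. There it is shown that if $\eta \in \big[\frac{1-r}{\alpha+\beta\varepsilon},\, \frac{1+r}{\beta(1+\varepsilon)}\big]$ for some $r \in (0,1)$, then $\lambda \leq r < 1$. The point to check is that the interval $\big(0,\, 2/(\beta(1+\varepsilon))\big)$ quoted in the corollary is exactly the union of these intervals over $r \in (0,1)$: as $r \to 1^-$ the lower endpoint $\frac{1-r}{\alpha+\beta\varepsilon}$ decreases to $0$ and the upper endpoint $\frac{1+r}{\beta(1+\varepsilon)}$ increases to $2/(\beta(1+\varepsilon))$.

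Making that last identification precise is the only step requiring a short computation, and it is where I expect the main (mild) obstacle to lie. Fix any $\eta \in \big(0,\, 2/(\beta(1+\varepsilon))\big)$; I must exhibit an admissible $r \in (0,1)$. The two interval constraints rearrange to $r \geq \eta\beta(1+\varepsilon) - 1$ and $r \geq 1 - \eta(\alpha+\beta\varepsilon)$. Since $\eta < 2/(\beta(1+\varepsilon))$ gives $\eta\beta(1+\varepsilon) - 1 < 1$, and $\eta > 0$ with $\alpha+\beta\varepsilon > 0$ gives $1 - \eta(\alpha+\beta\varepsilon) < 1$, both lower bounds are strictly below $1$; hence there exists $r \in (0,1)$ satisfying both, so $\lambda \leq r < 1$. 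Combining this with $e_{t+1} \leq \lambda e_t$ yields the linear convergence claim and recovers \cite[Sec. 5]{drusvyatskiy2020stochastic} as a special case.
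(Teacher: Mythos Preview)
Your proposal is correct and follows essentially the same route as the paper: specialize Theorem~\ref{thrm:OPGD} to the time-invariant case, observe $\varphi_t\equiv 0$ so the recursion collapses to $e_{t+1}\le\lambda e_t$ with $\lambda=\rho+\eta\beta\varepsilon$, and then verify $\lambda<1$ on the stated step-size range. The only cosmetic difference is that the paper obtains $\lambda<1$ by taking $r\to 1$ with strict inequality directly in~\eqref{eqn:doubleIneq}, whereas you argue that every $\eta\in(0,2/(\beta(1+\varepsilon)))$ lies in the interval~\eqref{stepSize} for some $r\in(0,1)$; these are equivalent.
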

\begin{proof}
When \eqref{problemStatement} is time independent, then 
for all $t\in\mathbb{N}_0$, $\alpha_{t}=\alpha$, $\beta_{t}=\beta$, $\varepsilon_{t}=\varepsilon$, $\rho_{t}=\rho$, $\varphi_{t}=0$. 
Accordingly, the recursion \eqref{RRM_contr} yields: 
$e_{t+1}\leq \lambda e_{t}$ with $\lambda=\rho+\eta\beta\varepsilon$. 
By replacing strict inequality and $r=1$ in \eqref{eqn:doubleIneq}, we conclude that $\eta<2/\beta(1+\varepsilon)$ implies 
$\lambda<1$. Hence $e_{t+1}/e_{t}\leq \lambda <1$.
\end{proof}

\section{Online Stochastic Gradient Descent}
\label{sec:OSPGD}
An exact expression for the distributional map 
$x_t \mapsto D_{t}(x_t)$ may not be available in general and, even if available, evaluating the gradient may be computational burdensome. We consider the case where we have access to a finite number of 
samples of $z_t$ at each time step $t$ to estimate the gradient $\nabla f_{t}(x_t,D_t(x_t))$. For example, given a mini-batch 
 of samples $\{\hat z_t^i\}_{i=1}^{N_t}$ of $z_t$, the approximate gradient is 
computed as $g_{t}(x_t) = (1/N_{t})\sum_{i=1}^{N_{t}}\nabla\ell_{t}(x_t,\hat z_{i})$; when $N_{t} = 1$ we have a ``greedy'' estimate and when $N_{t} > 1$ we have a ``lazy'' estimate~\cite{mendler2020stochastic}.
Accordingly, we consider an OSPGD described by:
\begin{align}
\label{eq:OSPGD}
x_{t+1} =\hat{G}_{t}(x_{t}), \,\,\,\,
\hat{G}_{t}(x) := \proj_{C_{t}}\left(x-\eta_{t}g_{t}(x)\right),
\end{align}
where $\eta_{t}>0$ is a stepsize. In the remainder, we focus on finding error bounds in the spirit of Theorem~\ref{thrm:OPGD} for~OSPGD.

\subsection{Bounds in expectation and high-probability}

Throughout our analysis, we interpret  OSPGD as an inexact OPGD with gradient error given by the random variable: 
\begin{equation}
\xi_{t} := \|g_{t}(x_{t})- \nabla f_{t}(x_{t},D_{t}(x_{t}))\|.
\end{equation}
We make the following assumption.

\vspace{.1cm}

\begin{assumption}{(\textbf{\textit{Sub-Weibull  Error}})}
\label{as:subWeibull}
The gradient error $\xi_{t}$ is sub-Weibull; i.e.,  $\xi_{t}\sim\sW(\theta,\nu_{t})$ for some $\theta, \nu_t > 0$. 
\end{assumption}
\vspace{.1cm}

Assumption~\ref{as:subWeibull} allows us to describe a variety of 
sub-cases, including scenarios where the error follows sub-Gaussian and
sub-Exponential distributions~\cite{Vershynin}, or any distribution 
with finite support. Further, notice that 
Assumption~\ref{as:subWeibull} does not require the random variables 
$\{\xi_t\}_{t \in \mathbb{N}_0}$ to be independent.
Examples of random variables that satisfy 
Assumption~\ref{as:subWeibull} are described in  
Section~\ref{sec:errormodel}.
Our error bounds for OSPGD are presented next.

\vspace{.1cm}

\begin{theorem}{(\textbf{\textit{Expected and High-probability Bounds for OSPGD}})}
\label{thrm:OSPGD}
Let Assumptions \ref{as:strongConvexity}-\ref{as:convexConstraints} hold, and suppose  that $\frac{\varepsilon_{t}\beta_{t}}{\alpha_{t}}<1$ for all 
$t \in \mathbb{N}_0$. Recall that 
$e_{t} = \|x_{t}-\bar{x}_{t}\|$.
Then, the following estimates hold for \eqref{eq:OSPGD}:
\begin{enumerate}
    \item For all $t \in \mathbb{N}$, 
    \begin{equation}
    \label{expecationBound}
    \EX\left[ e_{t+1} \right]\leq  a_{t}e_{0}+ \sum_{i=1}^{t}b_{i}(\varphi_i + \eta_i \EX [\xi_{i}] ) \, .
    \end{equation}
     \item If, additionally, Assumption~\ref{as:subWeibull} holds and $\delta\in(0,1)$, then with probability $1-\delta$:  
     \begin{equation}
    \label{highProbabilityBound}
    e_{t+1} \leq  \left(\frac{2e}{\theta}\right)^\theta \log^{\theta}\left(\frac{2}{\delta}\right) \big(a_{t}e_{0} + \sum_{i=1}^{t}b_{i}(\varphi_i + \eta_i \nu_i )\big),
     \end{equation}
\end{enumerate}
where $\{a_t\}$ and $\{b_i\}$ are as in Theorem \ref{thrm:OPGD}. 
\end{theorem}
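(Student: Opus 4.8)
The plan is to treat OSPGD as an inexact instance of OPGD in which the exact gradient $\nabla f_t(x_t, D_t(x_t))$ is replaced by the estimate $g_t(x_t)$, and to reduce both claims to a single pathwise recursion driven by the error $\xi_t$. First I would use the nonexpansiveness of the Euclidean projection to peel off the sampling error from the contraction analysis: since $\hat{G}_t(x_t) = \proj_{C_t}(x_t - \eta_t g_t(x_t))$ and $G_t(x_t, D_t(x_t)) = \proj_{C_t}(x_t - \eta_t \nabla f_t(x_t,D_t(x_t)))$ differ only in their gradient argument, nonexpansiveness gives $\norm{\hat{G}_t(x_t) - G_t(x_t, D_t(x_t))} \leq \eta_t \xi_t$. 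Inserting this into the triangle-inequality/add-and-subtract argument from the proof of Theorem~\ref{thrm:OPGD} (adding and subtracting $G_t(x_t, D_t(\bar x_t))$, then invoking Lemmas~\ref{lem:deviation} and~\ref{lem:OPGD} with Assumption~\ref{as:distributionalSensitivity}) yields the pathwise recursion
\begin{equation*}
e_{t+1} \leq (\rho_t + \eta_t \beta_t \varepsilon_t) e_t + \varphi_t + \eta_t \xi_t,
\end{equation*}
i.e.\ the OPGD recursion with the extra additive term $\eta_t \xi_t$. Unrolling exactly as in Theorem~\ref{thrm:OPGD}, with $\lambda_t := \rho_t + \eta_t \beta_t \varepsilon_t$ and the coefficients $\{a_t\},\{b_i\}$ defined there, gives $e_{t+1} \leq a_t e_0 + \sum_i b_i(\varphi_i + \eta_i \xi_i)$ almost surely.

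For the bound in expectation~\eqref{expecationBound} I would simply take expectations of the pathwise recursion. Since $\lambda_t,\varphi_t,\eta_t$ and the coefficients $a_t,b_i$ are deterministic (the performatively stable sequence $\{\bar x_t\}$ does not depend on the algorithm's samples), monotonicity and linearity of expectation give $\EX[e_{t+1}] \leq \lambda_t \EX[e_t] + \varphi_t + \eta_t \EX[\xi_t]$, and the same unrolling produces~\eqref{expecationBound}. No independence of the $\xi_t$ is needed here, since only linearity of expectation applied to the upper bound is used.

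For the high-probability bound~\eqref{highProbabilityBound} the idea is to view the deterministic right-hand side of the unrolled recursion as the proxy-variance of a \emph{single} sub-Weibull variable. Under Assumption~\ref{as:subWeibull}, $\xi_i \sim \sW(\theta,\nu_i)$; by Proposition~\ref{prop:closureSubWeibull}(4) each $b_i\eta_i\xi_i \sim \sW(\theta, b_i\eta_i\nu_i)$, by part~(1) the (possibly coupled) finite sum $\sum_i b_i\eta_i\xi_i$ is $\sW(\theta, \sum_i b_i\eta_i\nu_i)$, and by part~(3) adding the nonnegative constant $a_t e_0 + \sum_i b_i\varphi_i$ shows the whole right-hand side is dominated by some $W \sim \sW(\theta, M)$ with $M := a_t e_0 + \sum_i b_i(\varphi_i + \eta_i\nu_i)$. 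Using the equivalent tail characterization recalled after Definition~\ref{def:subweibull}, namely $\mathbb{P}(W \geq \epsilon) \leq 2\exp(-(\epsilon/\nu')^{1/\theta})$ with $\nu' = (2e/\theta)^\theta M$, I would set the right-hand side equal to $\delta$ and solve for $\epsilon$, obtaining $\epsilon = (2e/\theta)^\theta \log^\theta(2/\delta)\,M$; since $e_{t+1} \leq W$ almost surely, this gives~\eqref{highProbabilityBound} with probability at least $1-\delta$.

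The projection estimate and the unrolling are routine given Theorem~\ref{thrm:OPGD}. The only genuinely new ingredient, and the main obstacle, is the high-probability step: one must aggregate the per-step errors into a single sub-Weibull tail while respecting that the $\xi_i$ may be dependent. The observation that makes this work is that Proposition~\ref{prop:closureSubWeibull}(1) adds proxy-variances \emph{without} requiring independence (the common tail index $\theta$ is preserved), so no concentration or martingale argument is needed; the remaining care is bookkeeping the scaling $\nu' = (2e/\theta)^\theta \nu$ between the two equivalent characterizations and using nonnegativity of the deterministic offset so that part~(3) applies with $|c| = c$.
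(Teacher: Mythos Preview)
Your proposal is correct and follows essentially the same approach as the paper: derive the pathwise recursion $e_{t+1} \le \lambda_t e_t + \varphi_t + \eta_t\xi_t$ by peeling off the sampling error via nonexpansiveness of the projection and then reusing the OPGD contraction argument, unroll it, take expectations for part~1, and for part~2 aggregate the right-hand side into a single $\sW(\theta,\cdot)$ variable via Proposition~\ref{prop:closureSubWeibull} before inverting the tail bound. The only cosmetic differences are that the paper unrolls first and then takes expectations (you take expectations of the one-step recursion and then unroll), and the paper groups the deterministic offsets $b_i\varphi_i$ with $b_i\eta_i\xi_i$ term-by-term via part~(3) before summing, whereas you sum the stochastic terms first and add the full deterministic constant at the end; both routes are valid and yield the same $M$.
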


\begin{proof}
Note that $x_t \in C_t$ for all $t \in \mathbb{N}$ directly follows by definition of Euclidean projection.
To show \eqref{expecationBound}, we first find a stochastic recursion. By the triangle inequality:
\begin{align*}
   e_{t+1} & \leq \| \hat{G}_{t}(x_t) - G_t(\bar{x}_t,D_{t}(\bar{x}_{t})) \| +\varphi_t \\     
    & \leq \| \hat{G}_{t}(x_t) - G_{t}(x_t,D_{t}(x_{t}))\| \\
    & + \|G_{t}(x_t,D_{t}(x_{t})) - G_{t}(\bar{x}_t,D_{t}(\bar{x}_{t})) \| + \varphi_t,
\end{align*}
where the second inequality follows by adding and subtracting
$G_{t}(x_t,D_{t}(x_{t}))$.
By iterating \eqref{eq:aux_Gbound}, we have
$\|G_{t}(x_t,D_{t}(x_{t})) - G_{t}(\bar{x}_t,D_{t}(\bar{x}_{t})) \| \leq \lambda_{t}e_{t}+\varphi_t$, where $\lambda_{t} := \rho_{t} + \eta_{t}\beta_{t}\varepsilon_{t}$,  
and thus 
     $e_{t+1}  \leq \eta_t\|g_t(x_t) - \nabla f_t(x_t,D_{t}(x_{t}))\| + \lambda_{t}e_{t}+\varphi_t$. 
This yields the stochastic recursion
    $e_{t+1}\leq \lambda_{t}e_{t}+\varphi_t + \eta_{t}\xi_{t}$. 
Expanding the recursion yields
\begin{align*}
e_{t+1} & \leq \left(\prod_{i=0}^{t} \lambda_{i}  \right)e_{0} + \varphi_{t}  + \sum_{i=0}^{t-1} \left ( \prod_{k=i+1}^{t} \lambda_{k} \right) (\varphi_{i} + \eta_i \xi_i),
\end{align*}
or, equivalently,
\begin{equation}
\label{randomUpperBound}
e_{t+1} \leq a_{t} e_{0} + \sum_{i=0}^{t}b_{i}(\varphi_i + \eta_i \xi_i ).
\end{equation}
Thus, \eqref{expecationBound} follows by taking the 
expectation on both sides.

To prove \eqref{highProbabilityBound}, we demonstrate that the righthand side of \eqref{randomUpperBound} is sub-Weibull distributed.
Since $\xi_{i}\sim\sW(\theta,\nu_{i})$, Proposition~\ref{prop:closureSubWeibull} implies that $b_{i}(\varphi_{i}+\eta_{i} \xi_{i})\sim \sW \left(\theta,b_{i}(\varphi_{i}+\eta_{i}\nu_{i})
\right)$. By summing over $i$, we obtain:
$$
\sum_{i=0}^{t} b_{i}(\varphi_{i}+\eta_{i} \xi_{i}) \sim \sW \left(\theta,\ \sum_{i=0}^{t}b_{i}(\varphi_{i}+\eta_{i}\nu_{i})
\right).
$$

Thus, by letting $\omega_t:=a_{t}e_{0}+ \sum_{i=0}^{t} b_{i}(\varphi_{i}+\eta_{i} \xi_{i})$, we conclude that 
$\omega_t 
\sim \sW \left(\theta,\upsilon_{t}\right)$, where
$\upsilon_{t} = \ a_{t}e_{0} + \sum_{i=0}^{t}b_{i}(\varphi_{i}+\eta_{i}\nu_{i})$.
From Definition \ref{def:subweibull} we have
\begin{equation}
    \mathbb{P}(|\omega_{t}| \geq \epsilon ) \leq 2\exp\left(-\frac{\theta}{2e}\left(\frac{\epsilon}{\upsilon_{t}}\right)^\frac{1}{\theta}\right). 
\end{equation}
Now let $\delta\in(0,1)$ be fixed and set it equal to the right hand side of the above inequality. Solving for $\epsilon$ yields $\epsilon= \log^{\theta}\left(\frac{2}{\delta}\right)\left(\frac{2e}{\theta}\right)^{\theta}\upsilon_{t}$. Then, we have that 
$\omega_{t} \leq \left(\frac{2e}{\theta}\right)^{\theta} \log^{\theta}\left(\frac{2}{\delta}\right)\upsilon_{t}$,
with probability $1\negat\delta$. 
Finally, \eqref{highProbabilityBound} follows by substitution.
\end{proof}

The bound \eqref{expecationBound} generalizes the 
estimate in Theorem \ref{thrm:OPGD} by accounting for the gradient error. It is also worth pointing out that \eqref{expecationBound} and  \eqref{highProbabilityBound} have a similar structure;  indeed, \eqref{highProbabilityBound} differs only by a logarithmic factor and by the introduction of the tail parameters $\nu_{i}$ (which replaces the  expectation term).

\begin{remark}
An alternative high probability bound 
can be obtained by using \eqref{expecationBound} and Markov's inequality. For any 
$\delta\in(0,1)$, then Markov's inequality guarantees that:
\begin{equation}
\label{markovBound}
 e_{t+1}\leq \frac{1}{\delta}\bigg( a_{t}e_{0} + \sum_{i=1}^{t}b_{i}(\varphi_{i} + \eta_i \EX [e_i] ) \bigg), 
\end{equation}
with  probability at least $1-\delta$. However, if we increase the confidence of the bound by allowing $\delta\rightarrow 0$, the right-hand-side of~\eqref{markovBound} grows more rapidly than~\eqref{highProbabilityBound}.
\QEDB
\end{remark}

Note that the bounds in Theorem~\ref{thrm:OSPGD} are valid for any $t \in \mathbb{N}$. The asymptotic behavior is noted in the next remark. 

\begin{remark}
If~\eqref{stepSize} holds, then $\limsup_{t \rightarrow +\infty} e_{t} \leq (1 - \tilde{\lambda})^{-1}( \tilde{\varphi} + \tilde{\eta} \tilde{\xi} )$ \emph{almost surely}, where $\tilde{\eta}$ and $\tilde{\xi}$ are upper bounds on the step size and $\EX [\xi_{t}]$; the proof is omitted because of space limits, but follows arguments similar to~\cite{bastianello2021stochastic}.  
\QEDB
\end{remark}

\subsection{Remarks on the error model}
\label{sec:errormodel}

The class of sub-Weibull distributions allows one to consider variety of error models. For instance, it includes sub-Gaussian and sub-exponential as sub-cases by setting $\theta = 1/2$ and $\theta = 1$, respectively. We notice that a sub-Gaussian assumption was typically utilized in prior works on stochastic gradient descent; for example, the assumption $\mathbb{E}[\exp\left(\xi^2/\sigma^2\right)]\leq e$ in~\cite{nemirovski2008robust} corresponds to sub-Gaussian tail behavior. However, recent works suggest that stochastic gradient descent may exhibit errors with tails that are heavier than a sub-Gaussian (see, e.g.,~\cite{hodgkinson2020multiplicative}). To further elaborate on the flexibility offered by a sub-Weibull model, we provide the following additional examples. 

\begin{example} 
Suppose that each entry of the gradient error $g_{t}(x_{t})- \nabla f_{t}(x_{t},x_{t})$ follows a distribution $\sW(\theta, \nu)$, $i = 1, \ldots, d$ for given $\theta, \nu > 0$. Then $\|\xi_t\|$ is sub-Weibull with $\|\xi_t\| \sim \sW(\theta, 2^\theta \sqrt{d} \nu)$ \cite{bastianello2021stochastic}.
\QEDB
\end{example}

\begin{example} 
Suppose that an entry of the gradient error $g_{t}(x_{t})- \nabla f_{t}(x_{t},x_{t})$ is Gaussian is zero mean and variance $\varsigma^2$; then, it it sub-Gaussian with  sub-Gaussian norm $C \varsigma$, with $C$ an absolute constant~\cite{Vershynin}, and it is therefore a sub-Weibull $\sW(1/2, C^\prime \varsigma)$ with $C^\prime$ an absolute constant.
\QEDB
\end{example}

\begin{example} 
Suppose that $\xi_t$ is a random variable with mean $\mu_{t} := \EX[\xi_{t}]$, such that $\xi_t \in [\bar{\xi}, \underline{\xi}]$ almost surely. Then $\xi_t - \mu_{t} \sim \sW(1/2, (\underline{\xi} - \bar{\xi}) / \sqrt{2})$~\cite{bastianello2021stochastic}.
\QEDB
\end{example}

\section{Application to Electric Vehicle Charging}
\label{sec:results}
This section illustrates the use of the proposed 
algorithms in an application inspired from  
\cite{tushar2012economics}, where the operator of a fleet of electric vehicles (EVs) seeks to determine an optimal charging policy in order to minimize its charging costs.  The region of interest is modeled as a graph 
$\mc G = (\mc V, \mc E)$, where each node in $\mc V$ 
represents a charging station (or a group thereof), and an 
edge $(i,j)$ in $\mc E$ allows vehicles to transfer 
from node $i$ to $j$. We assume that the graph is strongly connected, so that  EVs can be redirected from one node to any other node. We let $x_{i} \in \realpos$ denote the energy 
requested by the fleet at node $i \in \mc V$.
We assume that the net energy available is limited, and define the set 
$C_t := \{x \in \mathbb{R}^d: \sum_{i \in \mc V} x_{i} \leq c_t\}$, for a given  
$c_t \in \real_{>0}$.
Given $\{x_{i}\}$, the operator of the power grid 
strategically chooses a price per unit of 
energy so as to optimize its revenue from selling energy; 
we let $z_{i} \in \realpos$ denote the selected price in 
region $i$, and we hypothesise that 
$z_i\sim\mathcal{N}(\mu_{t} x_i,\sigma_{t}^{2})$, 
$\mu_{i,t}, \sigma_t \in \realpos$ as an example. We note that, although the grid operator can choose the 
price arbitrarily large to maximize its revenue, large 
prices may compel the fleet operator to withdraw its demand, thus motivating the use of a model where the mean grows linearly with the energy demand. Accordingly, we model the cost function of 
the EV operator as follows~\cite{tushar2012economics}:
\begin{align}
\label{eq:costFunctionApplication}
\ell_t(x,z) = \sum_{i \in \mc V} z_{i} x_{i,t} 
- \gamma_{i,t} x_{i} 
+ \kappa_{i,t} x_{i}^2,
\end{align}
where $\gamma_{i,t} \in \realpos$, models the charging 
aggressiveness of the fleet operator, and  
$\kappa_{i,t} x_{i,t}^2$ quantifies the satisfaction the fleet
operator achieves from consuming one unit of energy.
In \eqref{eq:costFunctionApplication}, the term 
$z_{i,t} x_{i,t}$ describes the charging cost at station 
$i$, the quantity $\gamma_{i,t} x_{i,t}$, and models the 
energy demand at the $i$-th station.
Notice that, because the displacement of vehicles can 
change over time, we assume that the parameters 
$\gamma_{i,t}$ and $\xi_{i,t}$ are time dependent.
We note that: (i) because of the capacity constraint 
$x_t \in C_t$, the decision variables  
$x_{i,t}, i \in \mc V,$  are coupled, and (ii) although the 
optimization could be solved in a distributed fashion since 
\eqref{eq:costFunctionApplication} is separable, our focus 
is to solve it in a centralized way since the EV operator is
unique.

We apply the proposed methods to a system of 10 homogeneous charging stations over 100 time steps with fixed net energy ($c_{t}=10$). Namely, $\gamma_{i,t}=-1/100| t-50| + 1$ and $\kappa_{i,t} = 2$ for $i\in\{1,\hdots,10\}$. The charging cost distribution is informed by $\mu_{t}$ and $\sigma_{t}$; in our case, $\mu_{t}$ is the time series data of CAISO real-time prices deposited in Fig \ref{fig:muValues}  (taken from \url{http://www.energyonline.com}) and $\sigma_{t}=1$. Given these parameter values, the cost is $\alpha_{t}$-strongly convex and $\beta_{t}$-jointly smooth with $\alpha_{t}=\beta_{t}=2$. Following the results in \cite{givens1984metrics}, the distributional maps are $\varepsilon_{t}$-sensitive with $\varepsilon_{t}=\mu_{t}$. The sequence of performatively stable points are computed in closed form by solving the KKT equations. 

\begin{figure}[tb]
    \centering
    \includegraphics[width=\columnwidth]{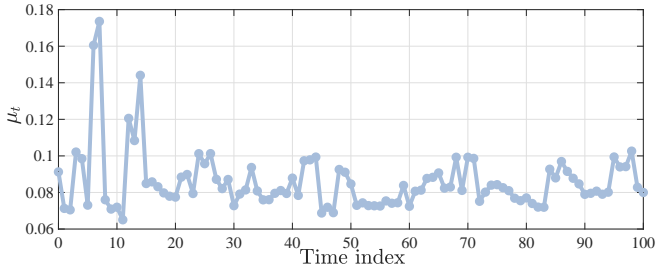}
    \caption{Time series data representing the price of energy in dollars per kilowatt hour (kWh). Each time step represents 5 minutes.}
    \label{fig:muValues}
\end{figure}
\begin{figure}
    \centering
    \includegraphics[width=\columnwidth]{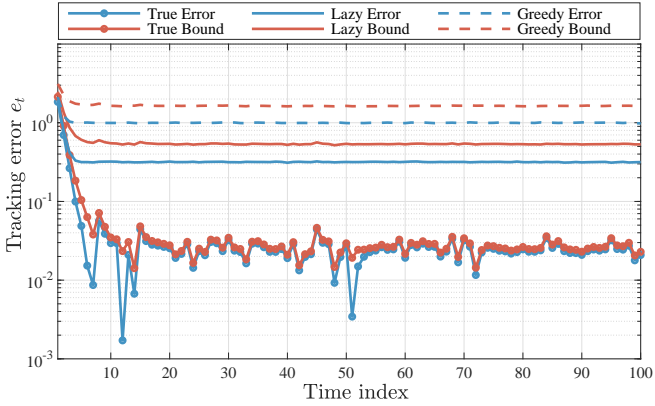}
    \caption{Performance comparison of OPGD and OSPGD.}
    \label{fig:performance}
    %\vspace{-.3cm}
\end{figure}

For each experiment, we run OPGD and OSPGD with fixed step size $\eta_{t} =0.3$ by drawing initial state $x_{0}$ uniformly from a sphere of radius $5$. For OSPGD, we compute the mean tracking error for both greedy and lazy deployments. The mean tracking error for each is computed via Monte Carlo simulation using $1,000$ realizations of the initial state. 

In Fig.~\ref{fig:performance}, we illustrate the tracking 
errors and corresponding upper bounds presented in 
Theorems~\ref{thrm:OPGD} and~\ref{thrm:OSPGD}. ``True'' (i.e., true gradient) refers to the OPGD, ``greedy'' to the OSPGD with $N_t = 1$, and ``lazy'' to the OSPGD with $N_t = 10$. We notice that the upper bound curve mimics the 
evolution of the tracking error; yet, in the instance of 
OSPGD the relationship is looser relative to the OPGD 
curves.

\section{Conclusions}
\label{sec:conclusions}
This paper considered online gradient and stochastic gradient methods for tracking solutions of time-varying stochastic optimization problems with decision-dependent  distributions. Under a distributional sensitivity assumption, we derived 
explicit error bounds for the two methods.
In particular, we derived convergence   in expectation and in high probability for the OSPGD by assuming that the error in the gradient follows a sub-Weibull distribution. To the best of our knowledge, our convergence results for online gradient methods are the first in the literature for time-varying stochastic optimization problems with decision-dependent  distributions.

\bibliographystyle{IEEEtran}
\bibliography{references.bib}

% Generated by IEEEtran.bst, version: 1.14 (2015/08/26)
\begin{thebibliography}{10}
\providecommand{\url}[1]{#1}
\csname url@samestyle\endcsname
\providecommand{\newblock}{\relax}
\providecommand{\bibinfo}[2]{#2}
\providecommand{\BIBentrySTDinterwordspacing}{\spaceskip=0pt\relax}
\providecommand{\BIBentryALTinterwordstretchfactor}{4}
\providecommand{\BIBentryALTinterwordspacing}{\spaceskip=\fontdimen2\font plus
\BIBentryALTinterwordstretchfactor\fontdimen3\font minus
  \fontdimen4\font\relax}
\providecommand{\BIBforeignlanguage}[2]{{%
\expandafter\ifx\csname l@#1\endcsname\relax
\typeout{** WARNING: IEEEtran.bst: No hyphenation pattern has been}%
\typeout{** loaded for the language `#1'. Using the pattern for}%
\typeout{** the default language instead.}%
\else
\language=\csname l@#1\endcsname
\fi
#2}}
\providecommand{\BIBdecl}{\relax}
\BIBdecl

\bibitem{wilson2019adaptive}
C.~Wilson, Y.~Bu, and V.~V. Veeravalli, ``Adaptive sequential machine
  learning,'' \emph{Sequential Analysis}, vol.~38, no.~4, pp. 545--568, 2019.

\bibitem{mathieu2011examining}
J.~L. Mathieu, D.~S. Callaway, and S.~Kiliccote, ``Examining uncertainty in
  demand response baseline models and variability in automated responses to
  dynamic pricing,'' in \emph{IEEE Conference on Decision and Control}, 2011,
  pp. 4332--4339.

\bibitem{tushar2012economics}
W.~Tushar, W.~Saad, H.~V. Poor, and D.~B. Smith, ``Economics of electric
  vehicle charging: A game theoretic approach,'' \emph{IEEE Transactions on
  Smart Grid}, vol.~3, no.~4, pp. 1767--1778, 2012.

\bibitem{hauswirth2021optimization}
A.~Hauswirth, S.~Bolognani, G.~Hug, and F.~D{\"o}rfler, ``Optimization
  algorithms as robust feedback controllers,'' \emph{arXiv preprint
  arXiv:2103.11329}, 2021.

\bibitem{bianchin2021online}
G.~Bianchin, M.~Vaquero, J.~Cortes, and E.~Dall'Anese, ``Online stochastic
  optimization for unknown linear systems: Data-driven synthesis and controller
  analysis,'' \emph{arXiv preprint arXiv:2108.13040}, 2021.

\bibitem{perdomo2020performative}
J.~Perdomo, T.~Zrnic, C.~Mendler-D{\"u}nner, and M.~Hardt, ``Performative
  prediction,'' in \emph{International Conference on Machine Learning}.\hskip
  1em plus 0.5em minus 0.4em\relax PMLR, 2020, pp. 7599--7609.

\bibitem{drusvyatskiy2020stochastic}
D.~Drusvyatskiy and L.~Xiao, ``Stochastic optimization with decision-dependent
  distributions,'' \emph{arXiv preprint arXiv:2011.11173}, 2020.

\bibitem{popkov2005gradient}
A.~Y. Popkov, ``Gradient methods for nonstationary unconstrained optimization
  problems,'' \emph{Automation and Remote Control}, vol.~66, no.~6, pp.
  883--891, 2005.

\bibitem{selvaratnam2018Numerical}
D.~D. Selvaratnam, I.~Shames, J.~H. Manton, and M.~Zamani, ``Numerical
  optimisation of time-varying strongly convex functions subject to
  time-varying constraints,'' in \emph{IEEE Conference on Decision and
  Control}, 2018, pp. 849--854.

\bibitem{mokhtari2016online}
A.~Mokhtari, S.~Shahrampour, A.~Jadbabaie, and A.~Ribeiro, ``Online
  optimization in dynamic environments: Improved regret rates for strongly
  convex problems,'' in \emph{IEEE Conference on Decision and Control}, 2016,
  pp. 7195--7201.

\bibitem{simonetto2017time}
A.~Simonetto, ``Time-varying convex optimization via time-varying averaged
  operators,'' \emph{arXiv preprint arXiv:1704.07338}, 2017.

\bibitem{madden2021bounds}
L.~Madden, S.~Becker, and E.~Dall’Anese, ``Bounds for the tracking error of
  first-order online optimization methods,'' \emph{Journal of Optimization
  Theory and Applications}, vol. 189, no.~2, pp. 437--457, 2021.

\bibitem{cutler2021stochastic}
J.~Cutler, D.~Drusvyatskiy, and Z.~Harchaoui, ``Stochastic optimization under
  time drift: iterate averaging, step decay, and high probability guarantees,''
  \emph{arXiv preprint arXiv:2108.07356}, 2021.

\bibitem{cao2020online}
X.~Cao, J.~Zhang, and H.~V. Poor, ``Online stochastic optimization with
  time-varying distributions,'' \emph{IEEE Tran. on Automatic Control},
  vol.~66, no.~4, pp. 1840--1847, 2021.

\bibitem{shames2020online}
I.~Shames and F.~Farokhi, ``Online stochastic convex optimization: Wasserstein
  distance variation,'' \emph{arXiv preprint arXiv:2006.01397}, 2020.

\bibitem{CW-VV-AN:18}
C.~Wilson, V.~V. Veeravalli, and A.~Nedi{\'c}, ``Adaptive sequential stochastic
  optimization,'' \emph{IEEE Tran. on Automatic Control}, vol.~64, no.~2, pp.
  496--509, 2018.

\bibitem{mendler2020stochastic}
C.~Mendler-D{\"u}nner, J.~C. Perdomo, T.~Zrnic, and M.~Hardt, ``Stochastic
  optimization for performative prediction,'' \emph{arXiv preprint
  arXiv:2006.06887}, 2020.

\bibitem{izzo2021learn}
Z.~Izzo, L.~Ying, and J.~Zou, ``How to learn when data reacts to your model:
  performative gradient descent,'' \emph{arXiv preprint arXiv:2102.07698},
  2021.

\bibitem{vladimirova2020sub}
M.~Vladimirova, S.~Girard, H.~Nguyen, and J.~Arbel, ``{Sub-Weibull}
  distributions: Generalizing {sub-Gaussian and sub-Exponential} properties to
  heavier tailed distributions,'' \emph{Stat}, vol.~9, no.~1, p. e318, 2020.

\bibitem{Vershynin}
R.~Vershynin, \emph{High-dimensional probability: An introduction with
  applications in data science}.\hskip 1em plus 0.5em minus 0.4em\relax
  Cambridge University press, 2018.

\bibitem{kantorovich1958space}
L.~V. Kantorovich and S.~Rubinshtein, ``On a space of totally additive
  functions,'' \emph{Vestnik of the St. Petersburg University: Mathematics},
  vol.~13, no.~7, pp. 52--59, 1958.

\bibitem{wong2020lasso}
K.~C. Wong, Z.~Li, and A.~Tewari, ``{Lasso guarantees for $\beta$-mixing
  heavy-tailed time series},'' \emph{The Annals of Statistics}, vol.~48, no.~2,
  pp. 1124 -- 1142, 2020.

\bibitem{bastianello2021stochastic}
N.~Bastianello, L.~Madden, R.~Carli, and E.~Dall'Anese, ``A stochastic operator
  framework for inexact static and online optimization,'' \emph{arXiv preprint
  arXiv:2105.09884}, 2021.

\bibitem{jiang2001input}
Z.-P. Jiang and Y.~Wang, ``Input-to-state stability for discrete-time nonlinear
  systems,'' \emph{Automatica}, vol.~37, no.~6, pp. 857--869, 2001.

\bibitem{nemirovski2008robust}
A.~Nemirovski, A.~Juditsky, G.~Lan, and A.~Shapiro, ``Robust stochastic
  approximation approach to stochastic programming,'' \emph{SIAM Journal on
  optimization}, vol.~19, no.~4, pp. 1574--1609, 2009.

\bibitem{hodgkinson2020multiplicative}
L.~Hodgkinson and M.~W. Mahoney, ``Multiplicative noise and heavy tails in
  stochastic optimization,'' \emph{arXiv preprint arXiv:2006.06293}, 2020.

\bibitem{givens1984metrics}
C.~R. Givens and R.~M. Shortt, ``{A class of Wasserstein metrics for
  probability distributions.}'' \emph{Michigan Mathematical Journal}, vol.~31,
  no.~2, pp. 231 -- 240, 1984.

\end{thebibliography}

\end{document}